\newtheorem{example}{Example}
\newtheorem{prop}{Proposition}
\title{Fractals as Sculptures}
\author{Şahin Koçak \footnote{Anadolu University. e-mail: skocak@anadolu.edu.tr} }
\begin{document}
\maketitle

\begin{abstract}

The attractors of iterated function systems are usually obtained as the Hausdorff limit of any non-empty compact subset under iteration. In this note we show that an iterated function system on a boundedly compact metric space has compact, invariant subspaces so that the attractor of the IFS can also be expressed as the intersection of a sequence of decreasing compact spaces.

\end{abstract}

\textbf{Keywords:} Iterated function systems

\textbf{MSC:} 28A80

Painting and poetry have more to do with ``adding", but sculpture has more to do with ``subtracting". As the legend goes, Michelangelo must have said to the astounded spectators of David, ``He was already in the marble block, I just had to chisel away the superfluous material".

\begin{figure}[h]
\centering
\includegraphics[scale=0.4]{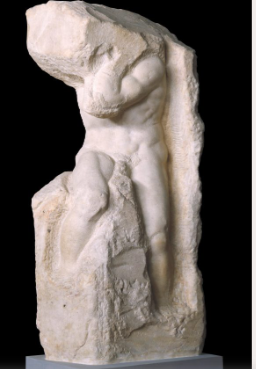}
%\vspace{-.9cm}
\caption{\label{mich}Michelangelo chiseling away the superfluous material (\cite{Michweb})}
\end{figure}

I remembered Michelangelo's words as I for the first time encountered the Cantor set in my youth. Cantor was taking his thin and fragile block  and chiseling away some superfluous material. He was first
discarding the (open) middle third, without hesitating to make his block disconnected (in contrast to sculptors whose works are mostly connected), then discarding the middle thirds of the remaining two pieces, and was going on forever applying this recipe. What remained in his hands, was utterly disconnected.
We called it later a fractal! These points surviving all the deletions are the intersection of all the intermediary stages.
(see Fig.\ref{cantor})
%\vspace{-.3cm}
\begin{figure}[h]
\centering
  \includegraphics[width=0.9\textwidth]{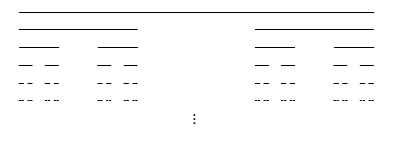}
%\vspace{-.9cm}
\caption{Cantor Set obtained by successive deletions}
  \label{cantor}
\end{figure}

Then Sierpinski took a triangle, discarded the (open) middle fourth, then the middle fourths of the remaining three, and so on... The surviving points gave another fractal.
 (see Fig.\ref{sier})
\vspace{-.22cm}
\begin{figure}[h]
\centering
  \includegraphics[width=\textwidth]{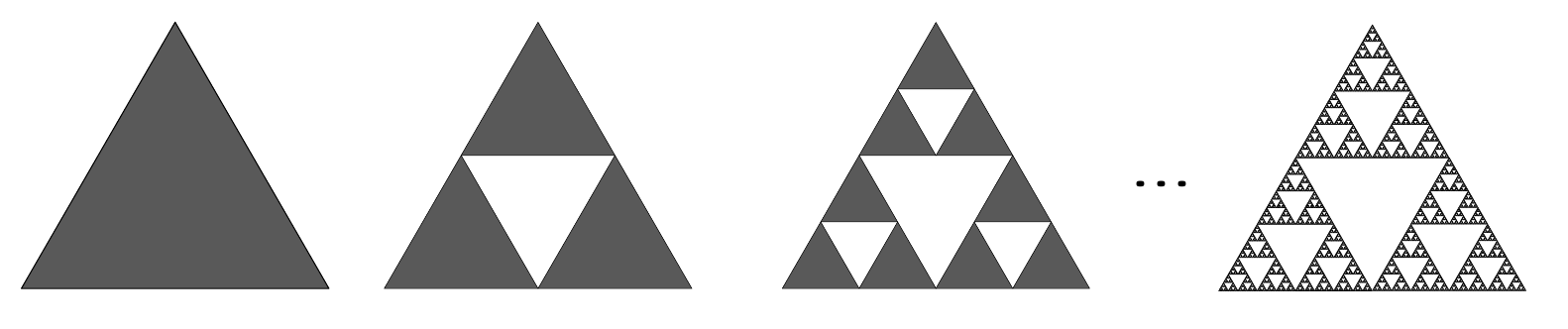}
 \vspace{-.8cm} \caption{Sierpinski Triangle obtained by successive deletions }
  \label{sier}
\end{figure}

Menger came maybe closer to the great masters of art as he took a 3-dimensional cube and chiseled away tunnels through it. (see Fig.\ref{Men}) This marvellous fractal having zero volume contains nevertheless a homeomorphic copy of every compact topological space of topological dimension 1. It has been very recently (\cite{Knots}) shown that it even contains a copy of every knot!

\begin{figure}[h]
\centering
\includegraphics[scale=0.7]{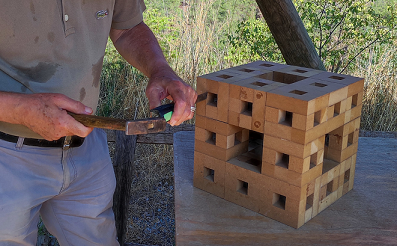}
%\vspace{-.9cm}
\caption{\label{Men}Chiseling tunnels through a cube}
\end{figure}

In the seventies there were a large collection of ``fractals" around and the term was coined and the notion was made popular to the public by Mandelbrot (\cite{Man1}, \cite{Man2}). Obviously a systematic treatment of these creatures was in need and Hutchinson devised the notion of iterated function systems (\cite{Hutch}), which comprised many of these fractals; though a subspecies emerging from a totally different source, from some innocent-seeming dynamical systems, resisted to pass into this scheme. Nevertheless, the iterated function systems were able to mimic many ``fractals", including even botanical shapes (\cite{Barn}) and we became addicted to the newly upcoming computers
to see the slowly emerging, startling fractals. You consider some
contractions $f_1,f_2,\dots,f_N$ on a complete metric space $X$
and show that there exists a unique (non-empty) compact subspace
$A$ satisfying $f_1(A)\cup f_2(A) \cup \dots \cup f_N(A)=A$, which
you declare to be a ``fractal" (we use this term in a loose sense,
calling to mind spaces with some kind of self-similarity and
fractional dimension). The iterated function system (IFS) approach
is constructive in the sense that you can start with any
(non-empty) compact subspace $B\subset X$, build the image
$F(B)=f_1(B)\cup f_2(B)\cup \dots \cup f_N(B)$, then iterate this
process obtaining a sequence
                 \[B, F(B), F(F(B))=:F^2(B),\dots, F^n(B),\dots\]
of which the longed-for fractal $A$ is the limit point with respect to the Hausdorff metric. This metric is meanwhile an indispensible tool for pattern recognizers! (For a lovely exposition of the details we refer to \cite{Barn}.)

The only unhappy side of this construction is that it does not
need to hold
\[B\supset F(B) \supset F^2(B) \supset \cdots F^n(B) \supset \cdots \ .\]

If this would be the case, the infinite intersection would be the
Hausdorff limit and we could imagine like a sculptor that this
limit is obtained by successive deletions from the initial compact
set $B$. But generally this is not true. However we will show
below that in fact such a compact subspace $B$, which is invariant under all the functions constituting the iterated function system  can always be
found, thus somehow reconciling the IFS approach with the
classical deletion approach.

Our candidate of such a compact subspace $B$ will be a so-called multi-foci ellipse, a notion which might have interested the mathematicians of the antiquity, but apparently was  investigated only in the 19. century by a teen no lesser than Maxwell, the creator of the famous electro-magnetic equations (\cite{Max}). His ``oval curves" are defined as the locus of points on the plane whose distance-sum to a given (finite) set of points is a given number.  The same definition applies surely to any metric space, where we allow the distance-sum to be less than or equal to a given number. We will use in the following multi-foci ellipses whose focal points will be the fixed points of the contractive functions of the given iterated function system.

\begin{figure}[h]
\centering
\includegraphics[scale=0.5]{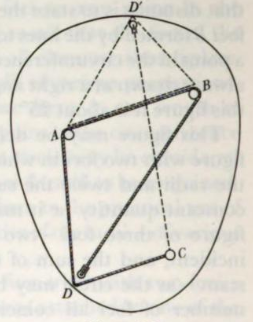}
%\vspace{-.9cm}
\caption{\label{Max} Maxwell's drawing of a 3-foci ellipse (with focal points $A, B$ and $C$) with a thread (\cite{Max}) (and without a computer, which was non-existent anyway!)}
\end{figure}

 In the following proposition we will make the slightly stronger
assumption that the space $X$ is boundedly compact which suffices
for all applications and which implies completeness.

\begin{prop}
Let $(X,d)$ be a boundedly compact metric space and $f_i:X\to
X$, $i=1,2,\dots,N$ be contractions.
 Then, there exists a (non-empty) compact subspace $B\subset X$ with $f_i(B)\subset B$ for all $i=1,2,\dots,N.$

\end{prop}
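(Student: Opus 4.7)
The statement itself suggests the construction: $B$ will be a Maxwell multi-foci ellipse with foci at the fixed points of the $f_i$. By the Banach fixed-point theorem (applicable because $X$ is boundedly compact, hence complete), each $f_i$ has a unique fixed point $p_i \in X$. Let $c_i < 1$ be the contraction constant of $f_i$ and set $c := \max_i c_i < 1$. For a parameter $r > 0$, define
\[
B_r := \Bigl\{\, x \in X : \sum_{i=1}^N d(x,p_i) \le r \,\Bigr\}.
\]
The goal is to show that, for all sufficiently large $r$, the set $B_r$ is non-empty, compact, and satisfies $f_j(B_r) \subseteq B_r$ for every $j$.

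For the invariance, fix $x \in B_r$ and an index $j$. Since $f_j$ is a $c_j$-contraction with fixed point $p_j$, the triangle inequality pivoted through $f_j(p_i)$ gives
\[
d(f_j(x), p_i) \;\le\; d(f_j(x), f_j(p_i)) + d(f_j(p_i), p_i) \;\le\; c_j\, d(x,p_i) + d(f_j(p_i), p_i),
\]
where the second summand vanishes at $i = j$ because $f_j(p_j) = p_j$. Summing over $i$ yields
\[
\sum_{i=1}^N d(f_j(x), p_i) \;\le\; c_j \sum_{i=1}^N d(x, p_i) + \sum_{i \ne j} d(f_j(p_i), p_i) \;\le\; c\, r + C,
\]
with $C := \max_j \sum_{i \ne j} d(f_j(p_i), p_i)$ a constant depending only on the IFS. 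Hence any $r \ge C/(1-c)$ makes $B_r$ invariant under each $f_j$.

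To ensure non-emptiness, enlarge $r$ further if necessary so that $r \ge \sum_i d(p_1, p_i)$; this puts $p_1 \in B_r$. Compactness is then automatic: the map $x \mapsto \sum_i d(x, p_i)$ is continuous, so $B_r$ is closed; and $B_r$ lies inside the closed $r$-ball about $p_1$, so it is bounded. Bounded compactness of $X$ promotes closed-and-bounded to compact.

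No step is genuinely difficult; the only substantive design choice is to place the foci exactly at the fixed points $p_i$. That choice is what makes the $i = j$ summand lose its additive error term and allows the contraction factor $c_j$ to multiply the \emph{entire} ellipse-sum $\sum_i d(x,p_i)$, reducing the invariance question to the single scalar inequality $c\,r + C \le r$. Everything else is the triangle inequality and routine bookkeeping.
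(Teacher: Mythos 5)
Your proof is correct and is essentially the paper's own argument: both take the Maxwell multi-foci ellipse with foci at the fixed points and reduce invariance to the scalar inequality $cr + C \le r$, differing only in the immaterial choice of pivot in the triangle inequality (you route through $f_j(p_i)$, the paper through $a_i$ and then $a_j$, giving its constant $(\lambda+1)D$ where you have $C \le (1+c)\max_j\sum_i d(p_j,p_i)$). Your explicit verification of non-emptiness and of closed-and-bounded-implies-compact is a small completeness bonus the paper leaves implicit, but it is not a different approach.
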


\begin{proof}
Let the contractions $f_i$ have the contraction ratios $\lambda_i$
with $0<\lambda_i <1$, and $a_i\in X \ (i=1,2,\dots, N)$ be the
fixed points of $f_i$ (i.e. $f_i(a_i)=a_i$). \linebreak Let
$\displaystyle \lambda:=\max_{1\leq i \leq N} \{\lambda_i\}, \
\displaystyle D_i:=\sum_{j=1}^N d(a_i,a_j),\  D:=\max_{1\leq i\leq
N} \{ D_i\}$ and $M:=\dfrac{1+\lambda}{1-\lambda}D$. We consider the $N-$foci ellipse
$\displaystyle B=\{x\in X\mid \sum_{j=1}^{N}d(x,a_j)\leq M\}$. For
$x\in B$, we show that $f_i(x)\in B,$ i.e. $\displaystyle
\sum_{j=1}^{N}d(f_i(x),a_j)\leq M$ for all $i=1,2,\dots, N$.

$\begin{array}{ll}
\text{For } j=i: &  d(f_i(x),a_i)=d(f_i(x),f_i(a_i))\leq \lambda d(x,a_i)\\
\text{For } j\neq i: &  d(f_i(x),a_j)\leq d(f_i(x),a_i)+d(a_i,a_j)\\
                     &\leq \lambda d(x,a_i)+d(a_i,a_j)\leq \lambda d(x,a_j)+(\lambda+1)d(a_i,a_j)
\end{array}$

\noindent Thus,

$\begin{array}{ll}
  \displaystyle \sum_{j=1}^{N}d(f_i(x),a_j)& \leq \lambda d(x,a_i)+\displaystyle \sum_{j\neq i}[\lambda d(x,a_j)+(\lambda +1)d(a_i,a_j)]\\
&\leq \lambda \left(\displaystyle \sum_{j=1}^{N}d(x,a_j)
\right)+(\lambda +1)D_i\leq \lambda M +(\lambda +1)D=M.
\end{array}$
% \nonumber % Remove numbering (before each equation)
\end{proof}

\begin{example}
We consider the  IFS on   $\mathbb{R}^2$ consisting of two contractions $f_1$ and $f_2$,
whereby $f_1$ is a rotation of angle $\pi/6$ around the origin in the counterclockwise direction
 with contraction ratio $1/2$ and $f_2$ is a rotation of angle $\pi/6$ around $(1,0)$
 in the clockwise direction with contraction ratio $3/5$. If we choose the disk with center
  $(0,0)$ and radius $1$ as a random starting
  set $B$ of iteration, for example,
  then $F(B)=f_1(B)\cup f_2(B)$ is not contained in $B$ (See
  Fig.6).
 If however we choose the starting set as the ellipse
  $\frac{(x-1/2)^2}{4}+\frac{y^2}{15/4}\leq 1$ as suggested by the proof above,
  it holds $F(B)=f_1(B)\cup f_2(B)\subset B$. (See Fig.7a. We also show the second iteration
  of this IFS in Fig.7b) It seems that our sculptor will have a difficult job of laborious deletions to get a faint impression of the emerging fractal.
  To make him/her the life easier and to give a glimpse into the fractal we can offer to use an iteration
   \'a la Hutchinson and Barnsley, starting with a singleton, say $\{(1,1)\}$ (See Fig.8).

\begin{figure}[h]
\centering
  \includegraphics[width=0.6\textwidth]{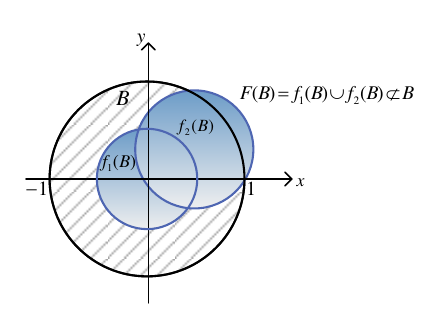}
 \vspace{-.3cm} \caption{A starting set $B$ which is not mapped into itself under $F$}
\end{figure}

\begin{figure*}[h]
    \begin{subfigure}[b]{0.45\textwidth}
        \centering
        \includegraphics[scale=.65]{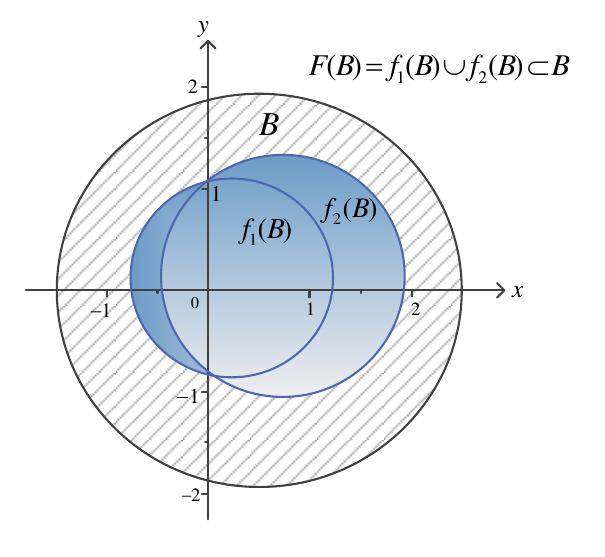}
        \caption{}
    \end{subfigure}%
    \hspace{2cm}
    \begin{subfigure}[b]{0.45\textwidth}
        \centering
        \includegraphics[scale=.65]{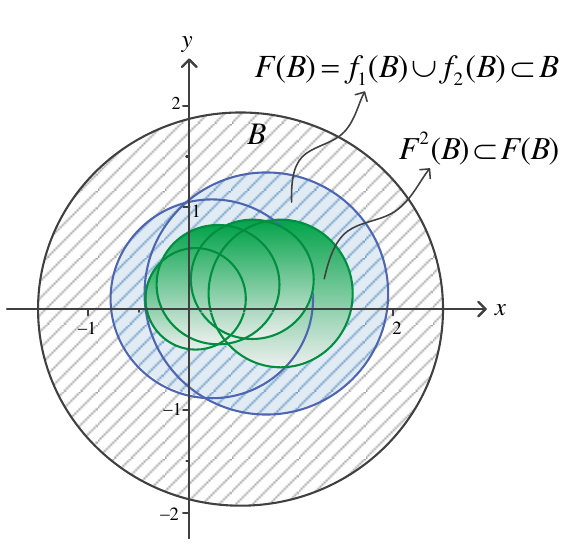}
        \caption{}
    \end{subfigure}
    \vspace{-.8cm}  \caption{A starting set $B$ which is mapped into itself under
    $F$(Fig.7a) and the second iteration $F^2(B)$ of it (Fig.7b)}
\end{figure*}

%\vspace{1cm}

\begin{figure}
\centering
  \includegraphics[width=0.2\textwidth]{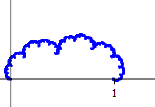}
 \vspace{-.3cm} \caption{The tenth iteration starting with a
 singleton}
\end{figure}
\end{example}

We hope that playing with the multi-foci ellipses of Maxwell for the well-known iterated function systems for the classical fractals might be intriguing for some readers versed in computer drawing. What is e.g. the locus of points having a distance-sum 2 to the corners of an equilateral triangle with unit edge-length?

\section*{Acknowledgment} I am indebted to Derya Çelik for drawing
the figures.

%\section*{Funding Declaration} There was no funding used for this work.

\end{document}